\newcommand{\Vykh}[2]{\thispagestyle{empty}
\newpage\noindent\raisebox{24pt}[0pt][0pt]{\makebox[\textwidth]{\protect\small\sf 
Збірник праць Ін-ту математики НАН України\hfill 2013,  том 10, N
4--5, \pageref{#1}--\pageref{#2}}}}
\newcommand{\UDC}[1]{\vspace{-24pt}
\begin{flushleft}
\tt #1
\end{flushleft}}
\newcommand{\Title}[1]{\begin{flushleft}
                       \LARGE \bf #1
                       \end{flushleft}}
\newcommand{\Author}[1]{\begin{flushleft}
                       \bfseries\itshape #1 \end{flushleft}}
\newcommand{\Address}[1]{\begin{flushleft}
                       \it #1 \end{flushleft}}
\newcommand{\AbsEng}[1]{
    \begin{flushright}
    \begin{minipage}{105mm}
     \small   #1
    \end{minipage}
    \end{flushright}}
\newcommand{\ehkol}{Author \ name}
\newcommand{\ohkol}{Article \ name}
\renewcommand{\@evenhead}{
\hspace*{-3pt}\raisebox{-15pt}[\headheight][0pt]{\vbox{\hbox to \textwidth 
{\thepage \hfil \ehkol}\vskip3pt \hrule}}}
\renewcommand{\@oddhead}{
\hspace*{-3pt}\raisebox{-15pt}[\headheight][0pt]{\vbox{\hbox to \textwidth 
{\ohkol \hfil \thepage}\vskip3pt\hrule}}}
\renewcommand{\@evenfoot}{}
\renewcommand{\@oddfoot}{}
\newtheorem{thm}{Theorem}
\newtheorem{prop}[thm]{Proposition}
\newtheorem{lem}[thm]{Lemma}
\theoremstyle{definition}
\newtheorem{defn}[thm]{Definition}
\providecommand{\norm}[2][\relax]{\left\|#2\right\|\ifx#1\relax\else_{#1}\fi}
\providecommand{\modulus}[2][\relax]{\left| #2 \right|\ifx#1\relax\else_{#1}\fi}
\providecommand{\scalar}[3][\relax]{\left\langle #2,#3 
        \right\rangle\ifx#1\relax\else_{#1}\fi}
\providecommand{\map}[1]{\mathsf{#1}}
\providecommand{\uir}[3][0]{\ifcase #1{\rho^{#2}_{#3}}% 
\or {\breve{\rho}^{#2}_{#3}}%
\or {\tilde{\rho}^{#2}_{#3}}%
\or {\hat{\rho}^{#2}_{#3}}\fi}
\providecommand{\FSpace}[3][]{\ensuremath{\ifx#2l \ell_{#3}^{#1}{}\else
    #2_{#3}^{#1}{}\fi}} 
\providecommand{\oper}[1]{\mathcal{#1}}
\providecommand{\Zbl}[1]{Zbl\href{http://www.emis.de:80/cgi-bin/zmen/ZMATH/en/zmathf.html?first=1&maxdocs=3&type=html&an=#1&format=complete}{#1}}
\providecommand{\myeprint}[2]{E-print: \href{#1}{\texttt{#2}}}
\DeclareFontFamily{U}{mathx}{\hyphenchar\font45}
\DeclareFontShape{U}{mathx}{m}{n}{
      <5> <6> <7> <8> <9> <10>
      <10.95> <12> <14.4> <17.28> <20.74> <24.88>
      mathx10
      }{}
\DeclareSymbolFont{mathx}{U}{mathx}{m}{n}
\DeclareMathAccent{\wideparen}{0}{mathx}{"75}
\begin{document}

\let\oldenddocument\enddocument
\let\document\begingroup
\def\enddoc{\endgroup\setcounter{equation}{0}\setcounter{footnote}{0}
\vspace{0pt plus 1fil}}
\let\enddocument=\enddoc

\renewcommand{\ehkol}{V.V. Kisil}
\renewcommand{\ohkol}{Boundedness of Relative Convolutions on
Nilpotent Lie Groups} \Vykh{kisil:FirstPage}{kisil:LastPage}

\renewcommand{\thefootnote}{}
\footnote{On  leave from Odessa University.}
\footnote{\hfill\copyright \ \ {\bf V.V.~Kisil, 2013}}

\UDC{UDC\  517.9}

\Author{\Large Vladimir V.~Kisil}

\Address{(School of Mathematics,
University of Leeds,
Leeds LS2\,9JT,
UK)}

\Title{Boundedness \\of Relative Convolutions\\ on
Nilpotent Lie Groups \label{kisil:FirstPage}}
% email{ }
\noindent {\small\bf  kisil@maths.leeds.ac.uk}

\vskip 2mm

\centerline{\small\sl Dedicated to memory of Professor Promarz
M.~Tamrazov}

\vskip 2mm

\AbsEng{We discuss some norm estimations for integrated
  representations. We use the covariant transform to extend Howe's
  method from the Heisenberg group to general nilpotent Lie groups.}

\medskip

\setcounter{page}{172}

\thispagestyle{empty}

%%%%%%%%%%%%%%
\noindent {\bf 1. Introduction.}   
%\vskip 0.2cm
%\section{Introduction}
%\label{sec:preliminaries}
%
Let \(G\) be a locally compact group, a left-invariant (Haar) measure on \(G\) is
denoted by \(dg\).  Let \(\uir{}{}\) be a bounded representation of the group
\(G\) in a vector space \(V\). The representation can be extended to a
function \(k\in\FSpace{L}{1}(G,dg)\) though integration:
\begin{equation}
  \label{eq:integrated-rep}
  \uir{}{}(k)=\int_{G} k(g)\,\uir{}{}(g)\,dg.
\end{equation} 
It is a homomorphism of the convolution algebra
\(\FSpace{L}{1}(G,dg)\) to an algebra of bounded operators on \(V\).

There are many important classes of operators described
by~\eqref{eq:integrated-rep}, notably pseudodifferential operators
(PDO) and Toeplitz
operators~\cite{Howe80b,Kisil94e,Kisil98a,Kisil13a}.  Thus, it is
important to have various norm estimations of \(\uir{}{}(k)\). We
already mentioned a straightforward inequality
\(\norm{\uir{}{}(k)}\leq C \norm[1]{k}\) for
\(k\in\FSpace{L}{1}(G,dg)\), however, other classes are of interest as
well.

If \(G\) is the Heisenberg group and \(\uir{}{}\) is its Schr\"odinger
representation, then \(\uir{}{}(\hat{a})\) is a PDO \(a(X,D)\) with
the symbol \(a\)~\cite{Howe80b,Folland89,Kisil13a}. Here, \(\hat{a}\)
is the Fourier transform of \(a\), as usual. The
Calder\'on--Vaillancourt theorem~\cite[Ch.~XIII]{MTaylor81} estimates \(\norm{a(X,D)}\) by
\(\FSpace{L}{\infty}\)-norm of a finite number of partial derivatives
of \(a\).

In this paper we revise the method used in~\cite[\S~3.1]{Howe80b} to
prove the Calder\'on--Vaillancourt estimations. It was described as ``rather
magical'' in~\cite[\S~2.5]{Folland89}. We hope, that  a usage of the covariant
transform dispel the mystery without undermining the power of the method. 
\vskip 0.2cm

%%%%%%%%%%%%%%
\noindent {\bf 2. Preliminaries.}   
%\vskip 0.2cm
%\section{Preliminaries}
%\label{sec:preliminaries}
%
Through the paper \(G\) denotes an exponential Lie group.  For a
square integrable irreducible representation \(\uir{}{}\) of \(G\) in
a Hilbert space \(V\) and a fixed admissible mother wavelet \(\phi\in
V\), the wavelet transform \(\oper{W}_\phi: V \rightarrow
\FSpace{C}{b}(G)\) is~\cite{AliAntGaz00,Kisil98a,Kisil13a}:
\begin{equation}
  \label{eq:wavelet-transform}
   [\oper{W}_\phi
   v](g):=\scalar{\uir{}{}(g^{-1})v}{\phi}=\scalar{v}{\uir{}{}(g)\phi},\qquad g\in G,\
   v\in V. 
\end{equation}
For an unimodular \(G\), the left \(\Lambda(g): f(g')\mapsto
f(g^{-1}g') \) and the right \(R: f(g')\mapsto f(g'g)\) regular
representations of \(G\) are unitary operators on
\(\FSpace{L}{2}(G,dg)\).
The covariant transforms intertwines the left and right regular
representations of \(G\) with the following actions of \(\uir{}{}\):
\begin{equation}
  \label{eq:covariant-intertwine}
  \Lambda  (g) \oper{W}_\phi = \oper{W}_\phi \uir{}{}(g)
  \quad\text{ and }\quad
  R (g) \oper{W}_\phi= \oper{W}_{ \uir{}{}(g) \phi }
  \quad\text{ for all } g\in G.
\end{equation}
 
For a fixed admissible vector \(\psi\in V\), the integrated
representation~\eqref{eq:integrated-rep} produces the contravariant
transform \(\oper{M}_\psi: \FSpace{L}{1}(G) \rightarrow V\),
cf.~\cite{Kisil98a,Kisil13a}: 
\begin{equation}
  \label{eq:contravariant-transform}
  \oper{M}_\psi^{\uir{}{}} (k)= \uir{}{}(k)\psi, \qquad
  \text{ where } k\in \FSpace{L}{1}(G).
\end{equation}
The contravariant transform \(\oper{M}_{\psi}^{\uir{}{}}\) intertwines the
left regular representation \( \Lambda  \)
on \( \FSpace{L}{2}(G)\) and \( \uir{}{} \):
\begin{equation}
  \label{eq:inv-transform-intertwine}
  \oper{M}_{\psi}^{\uir{}{}}\, \Lambda (g) = \uir{}{}(g)\, \oper{M}_{\psi}^{\uir{}{}}.
\end{equation}
Combining with~\eqref{eq:covariant-intertwine}, we see that the
composition \(\oper{M}_\psi^{\uir{}{}} \circ
\oper{W}_\phi^{\uir{}{}}\) of the covariant and contravariant
transform intertwines \(\uir{}{}\) with itself.  For an irreducible
square integrable \(\uir{}{}\) and suitably normalised admissible
\(\phi\) and \(\psi\), we use the Schur's lemma
\cite[Lem.~4.3.1]{AliAntGaz00}, \cite[Thm.~8.2.1]{Kirillov76} to
conclude that:
\begin{equation}
  \label{eq:wave-trans-inverse}
  \oper{M}_\psi^{\uir{}{}} \circ
  \oper{W}_\phi^{\uir{}{}}=\scalar{\psi}{\phi} I.
\end{equation}  

Let \(H\) be a subgroup of \(G\) and \(X=G/H\) be the respective
homogeneous space (the space of left cosets) with a (quasi-)invariant
measure \(dx\)~\cite[\S~9.1]{Kirillov76}. There is the natural
projection \(\map{p}: G \rightarrow X\). We usually fix a continuous
section \(\map{s}: X \rightarrow G\)~\cite[\S~13.2]{Kirillov76}, which
is a right inverse to \(\map{p}\).
We also define an operator of relative convolution on \(V\)~\cite{Kisil94e,Kisil13a},
cf.~\eqref{eq:integrated-rep}:
\begin{equation}
    \label{eq:relative-conv}
    \uir{}{}(k)=\int_{X} k(x)\,\uir{}{}(\map{s}(x))\,dx,
\end{equation}
with a kernel \(k\) defined on \(X=G/H\).  
\vskip 0.2cm

%%%%%%%%%%%%%%
\noindent {\bf 3. Norm Estimations.}
%\vskip 0.2cm
%\section{Norm Estimations}
%\label{sec:cald-vaill-type}
%
We start from the following lemma, which has a transparent proof in
terms of covariant transform, cf. ~\cite[\S~3.1]{Howe80b}
and~\cite[(2.75)]{Folland89}. For the rest of the paper we assume that
\(\uir{}{}\) is an irreducible square integrable representation of an
exponential Lie group \(G\) in \(V\) and mother wavelet \(\phi,\psi
\in V\) are admissible.
\begin{lem}
  %Let \(\uir{}{}\) be an irreducible representation of \(G\) in a
  %Hilbert space \(V\).  
  Let \(\phi\in V\) be such that, for \(\Phi=\oper{W}_\phi \phi\), the
  reciprocal \(\Phi^{-1}\) is bounded on \(G\) or \(X=G/H\).  Then,
  for the integrated representation~\eqref{eq:integrated-rep} or
  relative convolution~\eqref{eq:relative-conv}, we have the
  inequality:
  \begin{equation}
    \label{eq:norm-inequal-repres}
    \norm{\uir{}{}(f)}\leq \norm{\Lambda \otimes R(f \Phi^{-1})},
  \end{equation}
  where \((\Lambda \otimes R)(g): k(g')\mapsto k(g^{-1}g'g)\)
  acts on the image of \(\oper{W}_\phi\).
\end{lem}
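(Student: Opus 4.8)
The plan is to pass from the operator \(\uir{}{}(f)\) on \(V\) to its image under the covariant transform and to compare matrix coefficients. Since \(\uir{}{}\) is irreducible and square integrable, after normalising \(\scalar{\phi}{\phi}=1\) the identity~\eqref{eq:wave-trans-inverse} reads \(\oper{M}_\phi\oper{W}_\phi=I\), so \(\oper{W}_\phi\) is an isometry of \(V\) onto its image \(\tilde V\subseteq\FSpace{L}{2}(G)\) and \(\oper{M}_\phi=\oper{W}_\phi^*\) (the latter is a direct unfolding of the definitions). Consequently \(\norm{\uir{}{}(f)}=\sup\{\modulus{\scalar{\uir{}{}(f)v}{u}}:\norm{v}=\norm{u}=1\}\), and, writing \(F=\oper{W}_\phi v\), \(U=\oper{W}_\phi u\), the whole statement reduces to identifying \(\scalar{\uir{}{}(f)v}{u}\) with the matrix coefficient \(\scalar{(\Lambda\otimes R)(f\Phi^{-1})F}{U}\) computed in \(\FSpace{L}{2}(G)\).

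First I would rewrite the conjugation representation as \((\Lambda\otimes R)(g)=\Lambda(g)R(g)\) and feed a single group element through the two intertwining rules~\eqref{eq:covariant-intertwine}. Applying first \(R(g)\oper{W}_\phi=\oper{W}_{\uir{}{}(g)\phi}\) and then \(\Lambda(g)\oper{W}_{\uir{}{}(g)\phi}=\oper{W}_{\uir{}{}(g)\phi}\uir{}{}(g)\) gives the clean formula
\begin{equation*}
  (\Lambda\otimes R)(g)\,\oper{W}_\phi v=\oper{W}_{\uir{}{}(g)\phi}\,\uir{}{}(g)v,
\end{equation*}
so that conjugation on the image side transports \(v\) by \(\uir{}{}(g)\) while shifting the mother wavelet to \(\uir{}{}(g)\phi\) (still admissible, since \(\uir{}{}(g)\) is unitary).

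The key step is then to pair this against \(U=\oper{W}_\phi u\) and to absorb the shifted wavelet through Schur's lemma. Using \(\oper{M}_\phi=\oper{W}_\phi^*\) and the form of~\eqref{eq:wave-trans-inverse} with two different wavelets, \(\oper{M}_\phi\oper{W}_{\uir{}{}(g)\phi}=\scalar{\phi}{\uir{}{}(g)\phi}\,I=\Phi(g)\,I\), I obtain
\begin{equation*}
  \scalar{(\Lambda\otimes R)(g)F}{U}=\scalar{\oper{M}_\phi\oper{W}_{\uir{}{}(g)\phi}\uir{}{}(g)v}{u}=\Phi(g)\,\scalar{\uir{}{}(g)v}{u}.
\end{equation*}
This is the heart of the matter: the factor \(\Phi(g)=\scalar{\phi}{\uir{}{}(g)\phi}\) is exactly the reproducing-kernel value produced by the change of mother wavelet, and it is precisely what the weight \(\Phi^{-1}\) is designed to cancel. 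Integrating against \(f\Phi^{-1}\), and using boundedness of \(\Phi^{-1}\) (which keeps \(f\Phi^{-1}\) integrable, so that \((\Lambda\otimes R)(f\Phi^{-1})\) is a well-defined bounded operator), the two weights annihilate one another and
\begin{equation*}
  \scalar{(\Lambda\otimes R)(f\Phi^{-1})F}{U}=\int_G f(g)\,\scalar{\uir{}{}(g)v}{u}\,dg=\scalar{\uir{}{}(f)v}{u}.
\end{equation*}

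Finally I would read off the inequality. The displayed identity says that the lift of \(\uir{}{}(f)\) is the compression of \((\Lambda\otimes R)(f\Phi^{-1})\) to the subspace \(\tilde V\); since \(\oper{W}_\phi\) is isometric and a compression to a subspace cannot increase the operator norm, \(\norm{\uir{}{}(f)}\leq\norm{(\Lambda\otimes R)(f\Phi^{-1})}\). The relative-convolution case~\eqref{eq:relative-conv} is identical once integration over \(G\) is replaced by integration over \(X=G/H\) against the section \(\map{s}\) and the quasi-invariant measure \(dx\), with \(\Phi^{-1}\) assumed bounded on \(X\). I expect the only delicate points to be the justification of the orthogonality relation \(\oper{M}_\phi\oper{W}_{\uir{}{}(g)\phi}=\Phi(g)I\) for the translated wavelet and the interchange of the group integral with the inner product; note also that one should \emph{not} hope for an operator identity on \(\tilde V\), since \((\Lambda\otimes R)(f\Phi^{-1})\) need not preserve \(\tilde V\), which is exactly why the conclusion is an inequality rather than an equality.
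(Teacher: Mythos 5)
Your proposal is correct and follows essentially the same route as the paper: both hinge on the identity \(\Phi(g)\,\uir{}{}(g)=\oper{M}_\phi\circ(\Lambda\otimes R)(g)\circ\oper{W}_\phi\), obtained by combining the intertwining relations~\eqref{eq:covariant-intertwine} with the Schur-lemma formula~\eqref{eq:wave-trans-inverse} for the translated wavelet \(\uir{}{}(g)\phi\), followed by integration against \(f\Phi^{-1}\) and the partial-isometry (compression) argument. Your write-up merely spells out in matrix-coefficient form what the paper states as an operator identity, and your closing remark that \((\Lambda\otimes R)(f\Phi^{-1})\) need not preserve the image of \(\oper{W}_\phi\) is a useful clarification of why the conclusion is an inequality.
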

\begin{proof}
  We know from~\eqref{eq:wave-trans-inverse} that \(\oper{M}_\phi
  \circ \oper{W}_{\uir{}{}(g)\phi} =
  \scalar{\phi}{\uir{}{}(g)\phi} I\) on \(V\), thus:
  \begin{displaymath}
    \oper{M}_\phi \circ \oper{W}_{\uir{}{}(g)\phi} \circ \uir{}{}(g) =
    \scalar{\phi}{\uir{}{}(g)\phi} \uir{}{}(g) =\Phi (g) \uir{}{}(g). 
  \end{displaymath}
  On the other hand, the intertwining
  properties~\eqref{eq:covariant-intertwine} of the wavelet transform 
  imply:
  \begin{displaymath}
    \oper{M}_\phi \circ \oper{W}_{\uir{}{}(g)\phi} \circ \uir{}{}(g) =
    \oper{M}_\phi \circ (\Lambda \otimes R)(g)  \circ\oper{W}_{\phi}.
  \end{displaymath}
  Integrating the identity \(\Phi (g) \uir{}{}(g)=  \oper{M}_\phi
  \circ (\Lambda \otimes R)(g)  \circ\oper{W}_{\phi}\) with the
  function \(f\Phi^{-1}\) and use the partial isometries \(\oper{W}_\phi\)  and
  \(\oper{M}_\phi\) we get the inequality.
\end{proof}

The Lemma is most efficient if \(\Lambda \otimes R\) act in a simple
way. Thus, we give he following
\begin{defn}
  We say that the subgroup \(H\) has the \emph{complemented commutator
    property}, if there exists a continuous section \(\map{s}: X \rightarrow
  G\) such that:
  \begin{equation}
    \label{eq:H-upper-triangle-prop}
    \map{p}(\map{s}(x)^{-1}g \map{s}(x))=\map{p}(g),\qquad \text{ for
      all } x\in X=G/H, \ g\in G.
  \end{equation}
\end{defn}
For a Lie group \(G\) with the Lie algebra \(\mathfrak{g}\) define the
Lie algebra \(\mathfrak{h}=[\mathfrak{g},\mathfrak{g}]\). The subgroup
\(H=\exp(\mathfrak{h})\) (as well as any larger subgroup) has the
complemented commutator property~\eqref{eq:H-upper-triangle-prop}. Of
course, \(X=G/H\) is non-trivial if \(H\neq G\) and this happens, for
example, for a nilpotent \(G\).  In particular, for the Heisenberg
group, its centre has the complemented commutator property.

Note, that the complemented commutator
property~\eqref{eq:H-upper-triangle-prop} implies:
\begin{equation}
  \label{eq:h-identity-defn}
  \Lambda\otimes R(\map{s}(x)): g \mapsto gh, \quad \text{ for the
  unique } h=g^{-1}\map{s}(x)^{-1}g \map{s}(x)\in H.
\end{equation}
For a character \(\chi\) of the subgroup \(H\), we introduce an integral
transformation \(\wideparen{\ }:{L}_{1}(X)\rightarrow {C}(G)\):
\begin{equation}
  \label{eq:paren-transf}
  \wideparen{k}(g)=\int_X k(x)\, \chi(g^{-1}\map{s}(x)^{-1}g \map{s}(x))\,dx, 
\end{equation}
where \(h(x,g)=g^{-1}\map{s}(x)^{-1}g \map{s}(x)\) is in \(H\) due to
the relations~\eqref{eq:H-upper-triangle-prop}. This transformation
generalises the isotropic symbol defined for the Heisenberg group in
\cite[\S~2.1]{Howe80b}.
\begin{prop}
  Let a subgroup \(H\) of \(G\) has the complemented commutator
  property~\eqref{eq:H-upper-triangle-prop} and \(\uir{}{\chi}\) be
  an irreducible representation of \(G\) induced from a character
  \(\chi\) of \(H\), then
  \begin{equation}
    \label{eq:norm-ineq-Hn-operat}
    \norm{\uir{}{\chi}(f)}\leq 
    \norm[\infty]{\wideparen{f\Phi^{-1}}},
  \end{equation}
  with the \(\sup\)-norm of the function \(\wideparen{f\Phi^{-1}}\)
  on the right.
\end{prop}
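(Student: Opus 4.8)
The plan is to feed the Lemma into the special structure supplied by the inducing construction, so that the operator on the right-hand side of~\eqref{eq:norm-inequal-repres} becomes a pointwise multiplication operator whose norm is visibly the sup-norm appearing in~\eqref{eq:norm-ineq-Hn-operat}. First I would apply the Lemma to the relative convolution~\eqref{eq:relative-conv}, which yields
\[
  \norm{\uir{}{\chi}(f)}\leq \norm{\Lambda \otimes R(f\Phi^{-1})},
\]
reducing the whole task to identifying the operator \(\Lambda \otimes R(f\Phi^{-1})\) on the image of \(\oper{W}_\phi\). Writing it as an integral over \(X\),
\[
  (\Lambda \otimes R)(f\Phi^{-1})=\int_X (f\Phi^{-1})(x)\,(\Lambda \otimes R)(\map{s}(x))\,dx,
\]
I would then evaluate each factor \((\Lambda \otimes R)(\map{s}(x))\) by means of~\eqref{eq:h-identity-defn}: acting on a function \(k\) it sends \(k(g)\mapsto k(gh)\) with \(h=g^{-1}\map{s}(x)^{-1}g\map{s}(x)\in H\), where membership in \(H\) is guaranteed by the complemented commutator property~\eqref{eq:H-upper-triangle-prop}.

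The decisive step is to exploit that \(\uir{}{\chi}\) is induced from the character \(\chi\) of \(H\). The image of \(\oper{W}_\phi\) then consists of functions on \(G\) satisfying the covariance relation \(k(gh)=\chi(h)\,k(g)\) for \(h\in H\). Hence \((\Lambda \otimes R)(\map{s}(x))\) acts on such \(k\) merely as multiplication by the scalar \(\chi(g^{-1}\map{s}(x)^{-1}g\map{s}(x))\). Integrating this against \(f\Phi^{-1}\) and comparing with the definition~\eqref{eq:paren-transf} of the \(\wideparen{\ }\)-transform, I obtain that \((\Lambda \otimes R)(f\Phi^{-1})\) is exactly multiplication by the function \(\wideparen{f\Phi^{-1}}\) on \(G\).

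To finish, I would invoke the elementary fact that the operator norm of multiplication by a bounded function \(m\) on \(\FSpace{L}{2}(G)\)—and therefore on any invariant subspace, such as the image of \(\oper{W}_\phi\)—is bounded by \(\norm[\infty]{m}\). Combining this with the previous identification gives
\[
  \norm{\uir{}{\chi}(f)}\leq \norm{\Lambda \otimes R(f\Phi^{-1})}=\norm{\,\text{mult.\ by }\wideparen{f\Phi^{-1}}\,}\leq \norm[\infty]{\wideparen{f\Phi^{-1}}},
\]
which is~\eqref{eq:norm-ineq-Hn-operat}.

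The hard part will be justifying cleanly that the image of \(\oper{W}_\phi\) genuinely carries the covariance \(k(gh)=\chi(h)\,k(g)\) and is invariant under \(\Lambda \otimes R\), so that the reduction to a single multiplication operator is legitimate. This is precisely the point where the inducing construction must be matched with the complemented commutator property~\eqref{eq:H-upper-triangle-prop}, and where one must be careful to keep the convention for \(\chi\) versus \(\chi^{-1}\) consistent with~\eqref{eq:paren-transf}.
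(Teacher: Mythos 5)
Your proposal is correct and follows essentially the same route as the paper: apply the Lemma, use the \(\chi\)-covariance \(k(gh)=\chi(h)k(g)\) of the image of \(\oper{W}_\phi\) for the induced representation to turn \(\Lambda\otimes R(\map{s}(x))\) into multiplication by \(\chi(g^{-1}\map{s}(x)^{-1}g\map{s}(x))\), and integrate to identify \(\Lambda\otimes R(f\Phi^{-1})\) with multiplication by \(\wideparen{f\Phi^{-1}}\). The only cosmetic difference is that the paper states the equality \(\norm{\Lambda\otimes R(f\Phi^{-1})}=\norm[\infty]{\wideparen{f\Phi^{-1}}}\) on the irreducible component, whereas you settle for the inequality \(\leq\), which is all that the statement requires.
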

\begin{proof}
  For an induced representation \(\uir{}{\chi}\)~\cite[\S~13.2]{Kirillov76}, the covariant
  transform \(\oper{W}_\phi\) maps  \(V\) to a space
  \(\FSpace[\chi]{L}{2}(G)\) of functions having the property
  \(F(gh)=\chi(h)F(g)\)~\cite[\S~3.1]{Kisil13a}. 
  From~\eqref{eq:h-identity-defn}, the restriction of \(\Lambda\otimes
  R\) to the space \(\FSpace[\chi]{L}{2}(G)\) is, see:
  \begin{displaymath}
        \Lambda\otimes R(\map{s}(x)): \psi(g) \mapsto \psi(gh)=\chi(h(x,g))\psi(g).
  \end{displaymath}
  In other words, \(\Lambda\otimes R\) acts by multiplication on
  \(\FSpace[\chi]{L}{2}(G)\). 
  Then, integrating the representation \(\Lambda \otimes R\) over \(X\)
  with a function \(k\) we get an operator \((L\otimes R)(k)\),
  which reduces on the irreducible component to multiplication by the
  function \(\wideparen{k}(g)\).
  Put \(k=f\Phi^{-1}\) for \(\Phi=\oper{W}_\phi \phi\). Then, from
  the inequality~\eqref{eq:norm-inequal-repres}, the norm of operator
  \(\uir{}{\chi}(f)\) can be estimated by \(\norm{\Lambda \otimes
    R(f \Phi^{-1})}=\norm[\infty]{\wideparen{f\Phi^{-1}}}\).
\end{proof}

For a nilpotent step \(2\) Lie group, the
transformation~\eqref{eq:paren-transf} is almost the Fourier
transform, cf. the case of the Heisenberg group
in~\cite[\S~2.1]{Howe80b}. This allows to estimate
\(\norm[\infty]{\wideparen{f\Phi^{-1}}}\) through
\(\norm[\infty]{\wideparen{f}}\), where \(\wideparen{f}\) is in the
essence the symbol of the respective PDO. For other groups, the
expression \(g^{-1}\map{s}(x)^{-1}g \map{s}(x)\)
in~\eqref{eq:paren-transf} contains non-linear terms and its analysis
is more difficult. In some circumstance the integral Fourier
operators~\cite[Ch.~VIII]{MTaylor81} may be useful for this purpose.

\bigskip

{\small

%\renewcommand{\refname}{References}
%\bibliography{abbrevmr,akisil,analyse,algebra,arare,aclifford,aphysics,acompute,ageometry,acombin}
%\bibliographystyle{abbrv}
%\bibliographystyle{acm}
\providecommand{\noopsort}[1]{} \providecommand{\printfirst}[2]{#1}
  \providecommand{\singleletter}[1]{#1} \providecommand{\switchargs}[2]{#2#1}
  \providecommand{\irm}{\textup{I}} \providecommand{\iirm}{\textup{II}}
  \providecommand{\vrm}{\textup{V}} \providecommand{\cprime}{'}
  \providecommand{\eprint}[2]{\texttt{#2}}
  \providecommand{\myeprint}[2]{\texttt{#2}}
  \providecommand{\arXiv}[1]{\myeprint{http://arXiv.org/abs/#1}{arXiv:#1}}
  \providecommand{\doi}[1]{\href{http://dx.doi.org/#1}{doi:
  #1}}\providecommand{\CPP}{\texttt{C++}}
  \providecommand{\NoWEB}{\texttt{noweb}}
  \providecommand{\MetaPost}{\texttt{Meta}\-\texttt{Post}}
  \providecommand{\GiNaC}{\textsf{GiNaC}}
  \providecommand{\pyGiNaC}{\textsf{pyGiNaC}}
  \providecommand{\Asymptote}{\texttt{Asymptote}}

}

\label{kisil:LastPage}

\def \enddocument{\oldenddocument}

\end{document}